\newtheorem{thm}{Theorem}[section]
\newtheorem{lem}[thm]{Lemma}
\newtheorem{prop}[thm]{Proposition}
\newtheorem{cor}[thm]{Corollary}
\theoremstyle{definition}
\theoremstyle{remark}
\newtheorem{rem}{Remark}
\title[Orlicz modules over homogeneous spaces]{ORLICZ Modules  over Coset Spaces of Compact Subgroups in Locally compact Groups}
\author{Vishvesh Kumar}
\address{Vishvesh Kumar \endgraf School of Mathematical Sciences \endgraf National Institute of Science Education and Research Bhubaneshwar, HBNI  \endgraf At/Po- Jatni, District- Khurda, Odisha- 7520250,  India.} 
\email{vishveshmishra@gmail.com}
\begin{document}

\begin{abstract} Let $H$ be a compact subgroup of a locally compact group $G$ and let $m$ be the normalized $G$-invariant measure on homogeneous space $G/H$ associated with Weil's formula. Let $\varphi$ be a Young function satisfying $\Delta_2$-condition. We introduce the notion of left module action of $L^1(G/H, m)$ on the Orlicz spaces $L^\varphi(G/H, m).$ We also introduce a Banach left $L^1(G/H, m)$-submodule of $L^\varphi(G/H, m).$  

\end{abstract}
\keywords{Homogeneous space,  Convolution function modules, Orlicz spaces. }
\subjclass[2010]{Primary 43A85, 46E30; Secondary 43A15, 43A20}
\maketitle

\section{Introduction}
 
 The abstract theory of  Banach modules or Banach algebras plays an important role in various branches of Mathematics, for instance, abstract harmonic analysis, representation theory, operator theory; see \cite{Farashahi1,Fe,Fe1,Farabraz} and the references therein. In particular, convolution structure on the Orlicz spaces to  be an Banach algebra or a Banach module over a locally compact group or hypergroup were studied by many researchers \cite{Aghababa,Fe3,Rao2,Kumar,KumarDeb,KumarHaus}.  

In \cite{Kumar},  the author defined and studied the notion of abstract Banach convolution algebra on Orlicz spaces over homogeneous spaces of compact groups. Recently,  Ghaani Farashahi \cite{Farabraz} introduced the notion of abstract Banach convolution function module on the $L^p$-space on  coset spaces of compact subgroups in locally compact groups. The purpose of this article is to define and study a new class of abstract Banach module on Orlicz spaces over coset spaces of compact subgroups in locally compact groups.  Let us remark that Orlicz spaces are genuine generalization of Lebesgue spaces. It is worth mentioning that an appropriate use of  Jensen's inequality \cite[pg. 62]{Rao} plays a key role in this article. 

In the next section, we present some basics of Orlicz spaces and some classical harmonic analysis on a homogeneous space (the space of left cosets) of a locally compact group. Section 3 is devoted to the study of abstract convolution module structure on the Orlicz space $L^\varphi(G/H, m),$ where $H$ is a compact subgroup of a locally compact group $G,$ $m$ is the normalized $G$-invariant measure on the homogeneous space $G/H$ which satisfies Weil's formula and $\varphi$ is a Young function  satisfying $\Delta_2$-condition. In this section, we prove that $L^\varphi(G/H, m)$ is a Banach left $L^1(G/H, m)$-module with respect to a generalized convolution. We also introduce a Banach left $L^1(G/H, m)$-submodule of $L^\varphi(G/H).$

\section{Preliminaries} 
    A non-zero convex function $\varphi : \mathbb{R} \rightarrow [0, \infty]$ is called a {\it Young function} if it is even, left continuous with $\varphi(0)=0$ and $\underset{x \rightarrow \infty}{\lim} \varphi(x)= \infty$. Here we note that every Young function is an integral of a non-decreasing left continuous function \cite[Theorem 1]{Rao}. 
    
    Let $\Omega$ be a locally compact Hausdorff space and $m$ be a positive Radon measure on $\Omega.$ Denote the space of all equivalence classes of $m$-measurable functions on $\Omega$ by $L^0(\Omega).$ A Young function $\varphi$ satisfies {\it $\Delta_2$-condition} if there exist a constant $C>0$ and $x_0 >0$ such that $ \varphi(2x) \leq C\varphi(x)$  for all $x \geq x_0 $ if $m(\Omega)<\infty$ and $ \varphi(2x) \leq C\varphi(x)$  for all $x \geq 0 $ otherwise.
     Given a Young function $\varphi$, the {\it modular function} $\rho_\varphi : L^0(\Omega) \rightarrow \mathbb{R}$ is defined by $\rho_\varphi(f):=\int_\Omega \varphi(|f|)\, dm.$ We always assume that Young function $\varphi$ satisfies $\Delta_2$-condition. For a given Young function $\varphi,$ the {\it Orlicz space} $L^\varphi(\Omega, m),$ in short $L^\varphi(\Omega),$ is defined by 
  $$L^{\varphi}(\Omega) := \left\lbrace f \in L^0(\Omega) : \rho_\varphi(af)< \infty~~ \mbox{for ~~some}\,\, a>0  \right\rbrace.$$ 
  Then the Orlicz space is a Banach space with respect to the norm $\|\cdot \|_\varphi^0$ on $L^\varphi(\Omega)$ called {\it Luxemburg norm} or {\it gauge norm} which is defined by
  $$\|f \|_\varphi^0 := \mbox{inf} \left\lbrace k>0: \int_G \varphi \left(\frac{|f|}{k} \right) \,dm \leq 1 \right\rbrace.$$   
  
 If $\varphi(x)= |x|^p$ $1 \leq p <\infty$ then $L^\varphi(\Omega)$ is usual $L^p$-spaces, $1 \leq p <\infty.$ An example of Young function which satisfies $\Delta_2$-condition  and gives an Orlicz space other than $L^p$-spaces is given by 
 $\varphi(x)= (e+|x|)\, \text{log}(e+|x|)-e.$

We denote the space of all continuous functions  on $\Omega$ with compact support by $\mathcal{C}_c(\Omega).$ It is well known that if $\varphi$ satisfies $\Delta_2$-condition then $\mathcal{C}_c(\Omega)$ is a dense subspace of $L^\varphi(\Omega).$ If $A$ is a measurable subset of $\Omega$ such that $0<m(A)< \infty,$ then we have the {\it Jensen's Inequality} 
\begin{eqnarray*}
\varphi\left(\frac{\int_A f \,dm}{m(A)} \right) \leq \frac{\int_A \varphi(f)\, dm}{m(A)}.  
\end{eqnarray*}
We make use of the above inequality several times in this article. In this paper, we also employ the notations of the author in \cite{Farashahi,Farabraz}.

  For a locally compact group $G$ with the Haar measure $dx$ and $f,g \in L^\varphi(G),$ define convolution $*_G$ on $L^\varphi(G,m)$ by $$f*_Gg(x)= \int_G f(y) \,g(y^{-1}x)\, dx \,\,\,\,\, (x \in G).$$  It is well-known that $(L^\varphi(G), \|\cdot\|_\varphi^0)$ is a Banach algebra  with respect to the convolution product $*_{G}$ (see \cite{Hudzik}) that is, 
   $$\|f *_G g\|_\varPhi^0 \leq \|f\|_\varphi^0\,  \|g\|_\varphi^0$$
   for all $f, g \in L^\varphi(G).$
   Also, if $f \in L^1(G)$ and $f \in L^\varphi(G)$ then the above convolution define a module action of $L^1(G)$ on $L^\varphi(G)$ which makes $L^\varphi(K)$ a Banach left $L^1(K)$-module, that is, 
      \begin{equation} \label{orliczmodule}
              \|f *_G g\|_\varphi^0 \leq \|f\|_1\, \|g\|_\varphi^0
      \end{equation}
      for all $f \in L^1(G)$ and $g \in L^\varphi(G)$ (see \cite{KumarDeb,Raomodule}).

  Let $H$ be a compact subgroup of a locally compact group $G$ with the normalized Haar measure $dh.$ The left coset space $G/H$ can be seen as a homogeneous space with respect to the action of $G$ on $G/H$ given by left multiplication. The canonical surjection $q: G \rightarrow G/H$ is given by $q(x)= xH.$ Define $T_H(f)(xH)= \int_H f(xh)\,dh,$ then
  \begin{equation*}
  \mathcal{C}_c(G/H) =  \left\lbrace T_H(f): f \in \mathcal{C}_c(G)\right\rbrace 
\end{equation*}  

 The homogeneous space $G/H$ has a unique normalized $G$-invariant positive Radon measure $m$ that satisfies the Weil's formula 
\begin{equation} \label{weil}
\int_{G/H} T_H(f)(xH) \, dm(xH) = \int_G f(x)\, dx,
\end{equation}
and hence $\|T_H(f)\|_{L^1(G/H, m)} \leq \|f\|_{L^1}(G)$ for all  $f \in L^1(G).$ For more details on harmonic analysis on homogeneous spaces of locally compact groups see \cite{Farashahi,Farashahi1,Farashahi2,Farashahi3,Farashahi4,Reiter,Farabraz}.  


  \section{Orlicz modules over Coset Spaces of compact subgroups of locally compact groups} 
  
  Throughout this section, we assume that the Young function $\varphi$ satisfies $\Delta_2$- condition, $G$ is a locally compact group with a Haar measure $dx$ and $H$ is a compact subgroup of $G$ with the normalized Haar measure $dh.$ 
  It is also assumed that the homogeneous space $G/H$ has the normalized $G$-invariant measure $m$ satisfying the Weil's formula. In this section, we show that the space $L^\varphi(G/H)$ becomes a Banach left $L^1(G/H, m)$-module with respect to the convolution on $\mathcal{C}_c(G/K)$ defined in \cite{Farabraz}. We also define a subspace of $L^\varphi(G/H)$ and show that this subspace is a Banach left $L^1(G/H, m)$-submodule of $L^\varphi(G/H, m).$
  
  We begin this section with following result.
  
   \begin{thm}  \label{bddT_H} 
  Let $G$ be a locally compact group and let $H$ be a compact subgroup of $G.$ Let $m$ be the normalized $G$-invariant measure on the coset space $G/H.$   Then the linear map $T_H: \mathcal{C}_c(G) \rightarrow \mathcal{C}_c(G/H)$ satisfies 
  \begin{equation} \label{bdd}
  \|T_H(f)\|_{L^\varphi(G/H, m)}^0 \leq \|f\|_{L^\varphi(G)}^0
\end{equation} 
for all $f \in \mathcal{C}_c(G).$ Further, if the Young function $\varphi$ satisfies $\Delta_2$-condition. $T_H$ can be uniquely extended to a linear map from $L^\varphi(G)$ onto $L^\varphi(G/H, m).$  
  \end{thm}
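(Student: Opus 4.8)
The proof falls into two halves. The estimate \eqref{bdd} follows directly from Jensen's inequality over the compact fibre $H$ together with Weil's formula \eqref{weil}; the extension and surjectivity statements are then deduced from \eqref{bdd} and the density of $\mathcal{C}_c$ guaranteed by the $\Delta_2$-condition. To prove \eqref{bdd}, fix $f\in\mathcal{C}_c(G)$; we may assume $f\neq 0$, and take $k>\|f\|_{L^\varphi(G)}^0$, so that $\int_G\varphi(|f|/k)\,dx\leq 1$. For each coset $xH$ we have $|T_H(f)(xH)|\leq\int_H|f(xh)|\,dh$, and since $dh$ is a probability measure on $H$ the Jensen inequality recorded in Section~2, combined with the monotonicity of $\varphi$ on $[0,\infty)$, gives
\[
\varphi\!\left(\frac{|T_H(f)(xH)|}{k}\right)\ \leq\ \varphi\!\left(\int_H\frac{|f(xh)|}{k}\,dh\right)\ \leq\ \int_H\varphi\!\left(\frac{|f(xh)|}{k}\right)dh\ =\ T_H\!\left(\varphi\!\left(\tfrac{|f|}{k}\right)\right)(xH).
\]
Integrating over $G/H$ and applying Weil's formula \eqref{weil} to the non-negative function $\varphi(|f|/k)$ yields
\[
\int_{G/H}\varphi\!\left(\frac{|T_H(f)(xH)|}{k}\right)dm(xH)\ \leq\ \int_{G/H}T_H\!\left(\varphi\!\left(\tfrac{|f|}{k}\right)\right)dm\ =\ \int_G\varphi\!\left(\frac{|f|}{k}\right)dx\ \leq\ 1,
\]
so $\|T_H(f)\|_{L^\varphi(G/H,m)}^0\leq k$; letting $k\downarrow\|f\|_{L^\varphi(G)}^0$ gives \eqref{bdd}.

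Now assume $\varphi$ satisfies the $\Delta_2$-condition, so that $\mathcal{C}_c(G)$ is dense in $L^\varphi(G)$ and $\mathcal{C}_c(G/H)$ is dense in $L^\varphi(G/H,m)$. By \eqref{bdd} the operator $T_H$ is bounded from the dense subspace $\mathcal{C}_c(G)$ into the Banach space $L^\varphi(G/H,m)$, hence it extends uniquely to a bounded linear map $\overline{T_H}\colon L^\varphi(G)\to L^\varphi(G/H,m)$ with $\|\overline{T_H}\|\leq 1$. For surjectivity, the crucial observation is that the lift $g\circ q$ of any $g\in L^\varphi(G/H,m)$ lies in $L^\varphi(G)$: since $g\circ q$ is constant on cosets, Weil's formula gives, for $k>\|g\|_{L^\varphi(G/H,m)}^0$,
\[
\int_G\varphi\!\left(\frac{|g(q(x))|}{k}\right)dx\ =\ \int_{G/H}\int_H\varphi\!\left(\frac{|g(xH)|}{k}\right)dh\,dm(xH)\ =\ \int_{G/H}\varphi\!\left(\frac{|g(xH)|}{k}\right)dm(xH)\ \leq\ 1,
\]
so $\|g\circ q\|_{L^\varphi(G)}^0\leq\|g\|_{L^\varphi(G/H,m)}^0$ (using the standard facts that $g\circ q$ is Haar measurable and that, $H$ being compact, $q^{-1}(K)$ is compact for each compact $K\subseteq G/H$).

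Finally, if $g\in\mathcal{C}_c(G/H)$ then $g\circ q\in\mathcal{C}_c(G)$ and $T_H(g\circ q)(xH)=\int_H g(xhH)\,dh=g(xH)$, i.e.\ $T_H(g\circ q)=g$. For arbitrary $g\in L^\varphi(G/H,m)$ choose $g_n\in\mathcal{C}_c(G/H)$ with $g_n\to g$; by the last display applied to $g-g_n$ we get $g_n\circ q\to g\circ q$ in $L^\varphi(G)$, whence
\[
\overline{T_H}(g\circ q)\ =\ \lim_{n\to\infty}\overline{T_H}(g_n\circ q)\ =\ \lim_{n\to\infty}T_H(g_n\circ q)\ =\ \lim_{n\to\infty}g_n\ =\ g,
\]
so $\overline{T_H}$ is onto (indeed $g\mapsto g\circ q$ is a norm-decreasing right inverse). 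I expect the main obstacle to be the surjectivity step: the estimate \eqref{bdd} is a routine Jensen--Weil computation, but one must notice that, precisely because $H$ is compact, the naive lift $g\circ q$ already furnishes an $L^\varphi(G)$-preimage, and one has to handle the (standard, but not entirely automatic) measurability and properness properties of $q$.
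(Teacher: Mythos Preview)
Your proof is correct and follows essentially the same route as the paper: the bound \eqref{bdd} is obtained via Jensen's inequality on the probability space $(H,dh)$ followed by Weil's formula, and the extension is by density under $\Delta_2$. Your surjectivity argument via the lift $g\mapsto g\circ q$ is exactly what the paper does, though the paper postpones this computation to its Proposition~\ref{f_q} and the remark following it rather than including it in the proof of the theorem itself.
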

  \begin{proof}
  For $f \in \mathcal{C}_c(G)$ and $k >0,$ by using Weil's formula and Jensen's inequality, we have 
  \begin{eqnarray*}
  \rho_\varphi \left( \frac{T_H(f)}{k}\right) &=& \int_{G/H} \varphi \left( \frac{|T_H(f)|(xH)}{k}\right) \, dm(xH) \\ &=& \int_{G/H} \varphi \left( \left| \int_H \frac{f(xh)}{k} \, dh \right| \right) dm(xH) \\& \leq & \int_{G/H} \varphi \left(  \int_H \frac{|f(xh)|}{k} \, dh  \right) dm(xH)\\ & \leq & \int_{G/H} \int_H \varphi \left( \frac{|f(xh)|}{k} \right) dh \, dm(xH)\\ &=&   \int_{G/H} \left( \int_H \varphi \left( \frac{|f|}{k} \right)(xh)\, dh \right) \, dm(xH) \\ &=& \int_{G/H} T_H\left( \varphi \left( \frac{|f|}{k}\right)\right)(xH)\, dm(xH) \\&=& \int_G \varphi \left( \frac{|f|}{k} \right) dx  = \rho_\varphi\left(\frac{f}{k}\right). 
  \end{eqnarray*}
  Now, \begin{eqnarray*}
  \|f\|_{L^\varphi(G)}^0 &=& \inf \{k>0 : \rho_\varphi\left( \frac{f}{k}\right) \leq 1\}\\  &\geq & \inf\{k>0 : \rho_\varphi\left( \frac{T_H(f)}{k}\right) \leq 1\}=\|T_H(f)\|_{L^\varphi(G/H,m)}^0. 
  \end{eqnarray*}
  Therefore, we get $\|T_H(f)\|_{L^\varphi(G/H,m)}^0 \leq \|f\|_{L^\varphi(G)}^0 $ for all $f \in \mathcal{C}_c(G).$
  
  Since $\phi$ is $\Delta_2$-regular, $\mathcal{C}_c(G)$ and $\mathcal{C}_c(G/H)$ are dense in $L^\varphi(G)$ and $L^\varphi(G/H, m)$ respectively. Therefore, we can extend $T_H$ to a bounded linear map from $L^\varphi(G)$ onto $L^\varphi(G/H, m).$ We denote this extension of $T_H$ again by $T_H$ and it satisfies \eqref{bdd}. 
  \end{proof}
  
\begin{prop} \label{f_q} Let $G$ be a locally compact group and let $H$ be a compact subgroup of $G.$ Let $m$ be the normalized $G$-invariant measure on the coset space $G/H.$ Suppose that $\varphi$ satisfies $\Delta_2$-condition. 
If $f \in L^\varphi(G/H, m)$ and $f_q:= f \circ q$ then we have $f_q \in L^\varphi(G)$ with 
\begin{equation}
\|f_q\|_{L^\varphi(G)}^0 = \|f\|_{L^\varphi(G/H, m)}^0.
\end{equation}  
  \end{prop}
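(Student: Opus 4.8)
The plan is to establish the stronger identity $\rho_\varphi(f_q/k) = \rho_\varphi(f/k)$ for \emph{every} $k>0$; once this is in hand, both the membership $f_q\in L^\varphi(G)$ and the equality of Luxemburg norms drop out immediately from the definitions.

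The starting observation is that $f_q = f\circ q$ is constant on every left coset, since $q(xh)=xhH=xH$ for all $h\in H$. Hence for any Borel function $\psi\colon[0,\infty)\to[0,\infty)$ the composite $\psi(|f_q|) = (\psi(|f|))\circ q$ is again constant on cosets, and because $\int_H dh = 1$,
\[
T_H\big(\psi(|f_q|)\big)(xH) = \int_H \psi\big(|f_q(xh)|\big)\,dh = \int_H \psi\big(|f(xH)|\big)\,dh = \psi\big(|f(xH)|\big).
\]
In words, $T_H$ undoes the lift $g\mapsto g\circ q$. Applying this with $\psi=\varphi(\,\cdot\,/k)$ and then invoking Weil's formula \eqref{weil} for the nonnegative measurable function $x\mapsto\varphi(|f_q(x)|/k)$ on $G$ --- the identity \eqref{weil}, stated in the excerpt for $f\in\mathcal{C}_c(G)$, extends to all nonnegative measurable functions by the usual monotone convergence argument --- gives
\begin{align*}
\rho_\varphi\!\left(\frac{f_q}{k}\right) &= \int_G \varphi\!\left(\frac{|f_q(x)|}{k}\right)dx = \int_{G/H} T_H\!\left(\varphi\!\left(\frac{|f_q|}{k}\right)\right)\!(xH)\,dm(xH) \\
&= \int_{G/H}\varphi\!\left(\frac{|f(xH)|}{k}\right)dm(xH) = \rho_\varphi\!\left(\frac{f}{k}\right)
\end{align*}
for every $k>0$.

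Since $f\in L^\varphi(G/H,m)$, there is $a>0$ with $\rho_\varphi(af)<\infty$; taking $k=1/a$ in the identity above yields $\rho_\varphi(af_q)<\infty$, so $f_q\in L^\varphi(G)$. Finally, the same identity shows that the two sets $\{k>0:\rho_\varphi(f_q/k)\le 1\}$ and $\{k>0:\rho_\varphi(f/k)\le 1\}$ coincide, hence have the same infimum, which is precisely the assertion $\|f_q\|_{L^\varphi(G)}^0 = \|f\|_{L^\varphi(G/H,m)}^0$.

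The one point requiring a little care is the measure-theoretic bookkeeping around $T_H$ and Weil's formula: one must check that $\varphi(|f_q|/k)$ is measurable (it is, being a composite of Borel functions with the continuous map $q$ and the measurable $f$), that \eqref{weil} legitimately applies to it even though it is in general neither continuous nor compactly supported, and that the identity $T_H(\varphi(|f_q|/k))=\varphi(|f|/k)$ holds $m$-almost everywhere rather than merely pointwise. None of this is serious, but it is where essentially all the content lies; the remainder is just unwinding the definition of the gauge norm.
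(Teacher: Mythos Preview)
Your proof is correct and follows essentially the same route as the paper: both establish the modular identity $\rho_\varphi(f_q/k)=\rho_\varphi(f/k)$ for all $k>0$ by applying Weil's formula to $\varphi(|f_q|/k)$ and using that $f_q(xh)=f(xH)$ since $H$ is compact with normalized Haar measure. Your version is in fact a bit more scrupulous about the measure-theoretic points (extension of Weil's formula beyond $\mathcal{C}_c(G)$, measurability) than the paper's own argument.
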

 \begin{proof} 
   For $f \in L^\varphi(G/H, m)$ and $k>0,$ by Weil's formula and the fact that $H$ is compact, we have 
   \begin{eqnarray*}
   \rho_\varphi \left( \frac{f_q}{k} \right)&=& \int_G \varphi \left( \frac{|f_q|(x)}{k}\right) dx\\ &=& \int_{G/H} T_H \left( \varphi \left( \frac{|f_q|}{k}\right)\right)(xH)\, dm(xH)\\ &=& \int_{G/H} \left(\int_H \varphi \left(\frac{|f_q|(xh)}{k} \right) dh \right) dm(xH)\\
&=& \int_{G/H} \left(\int_H \varphi \left(\frac{|f(xhH)|}{k} \right) dh \right) dm(xH) \\ &=& \int_{G/H} \left(\int_H \varphi \left(\frac{|f(xH)|}{k} \right) dh \right) dm(xH) \\ &=& \left( \int_{G/H} \varphi \left( \frac{|f|(xH)}{k} \right) dm(xH) \right) \left( \int_H \, dh \right) = \rho_\varphi \left(\frac{f}{k} \right). 
 \end{eqnarray*}
 
 Therefore $ \rho_\varphi \left( \frac{f_q}{k} \right)= \rho_\varphi \left(\frac{f}{k} \right)$ and consequently, we get  $ \|f_q\|_{L^\varphi(G)}^0 = \|f\|_{L^\varphi(G/H, m)}^0.$ \end{proof}
\begin{rem} Note that $T_H(f_q)=f$ and therefore, it is clear from above corollary that for $f_q \in L^\varphi(G),$ the equality in Theorem \ref{bddT_H} holds.
\end{rem}
  
Here we set certain terminologies for further use. For any continuous function $f$ define the left translation by  $L_hf(\cdot)=f(h^{-1} (\cdot))$ and the right translation by  $R_hf=f((\cdot) h).$ Let $G$ be a locally compact group and let $H$ be a compact subgroup of $G.$ We set  
\begin{eqnarray*}
\mathcal{C}_c(G:H)&=& \{f \in \mathcal{C}_c(G): R_hf=f \,\forall\, h \in H\}, \\ A(G:H)&=& \{f \in \mathcal{C}_c(G) : L_hf=f \,\forall\, h \in H\} \\
\mbox{and }A(G/H) &=& \{g  \in \mathcal{C}_c(G/H) : L_hg=g\, \forall\, h \in H \}.\end{eqnarray*} 

For a Young function $\varphi \in \Delta_2,$ define $$A^\varphi(G:H)= \{f \in L^\varphi(G) : L_hf =f \,\forall\, h \in H\},$$ and also $$A^\varphi(G/H,m)= \{g \in L^\varphi(G/H, m) : L_hg =g \,\forall\, h \in H\}.$$  

Note that $A^\varphi(G/H, m)$ is the topological closure of $A(G/H)$ in $L^\varphi(G/H, m)$ and therefore it is a closed subspace of $L^\varphi(G/H, m).$ Similarly $A^\varphi(G:H)$ is closed subspace of $L^\varphi(G).$ It is known that $T_H$ maps $\mathcal{C}_c(G:H)$ and $A(G:H)$ onto $\mathcal{C}_c(G/H)$  and $A(G/H)$ respectively (see \cite[Proposition 4.2]{Farabraz}).

Since $A(G:H)$ is dense in $L^\varphi(G:H)$ and $A(G/H)$ is dense in $A^\varphi(G/H),$ the next lemma follows from the continuity of $T_H.$
\begin{lem} Let $G$ be a locally compact group and let $H$ be a compact subgroup of $G.$ Let $m$ be the normalized $G$-invariant measure on the coset space $G/H.$ Suppose that $\varphi$ satisfies $\Delta_2$-condition. Then the mapping $T_H$ maps $L^\varphi(G:H)$ onto $A^\varphi(G/H, m).$  
\end{lem}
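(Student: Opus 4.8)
The plan is to deduce the statement almost immediately from the density facts already recorded together with the boundedness of $T_H$ from Theorem~\ref{bddT_H}. The key observations are: (i) $T_H : L^\varphi(G) \to L^\varphi(G/H,m)$ is a bounded (hence continuous) linear map; (ii) by \cite[Proposition 4.2]{Farabraz}, $T_H$ maps the dense subspace $A(G:H)$ of $A^\varphi(G:H)$ \emph{onto} the dense subspace $A(G/H)$ of $A^\varphi(G/H,m)$; and (iii) a continuous linear map carries the closure of a set into the closure of its image.

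First I would show $T_H\bigl(A^\varphi(G:H)\bigr) \subseteq A^\varphi(G/H,m)$. Take $f \in A^\varphi(G:H)$; since $A(G:H)$ is dense in $A^\varphi(G:H)$, choose $f_n \in A(G:H)$ with $\|f_n - f\|_{L^\varphi(G)}^0 \to 0$. By \eqref{bdd}, $\|T_H(f_n) - T_H(f)\|_{L^\varphi(G/H,m)}^0 \le \|f_n - f\|_{L^\varphi(G)}^0 \to 0$, so $T_H(f_n) \to T_H(f)$ in $L^\varphi(G/H,m)$. Each $T_H(f_n) \in A(G/H) \subseteq A^\varphi(G/H,m)$, and the latter is closed (as noted in the excerpt, it is the topological closure of $A(G/H)$), so $T_H(f) \in A^\varphi(G/H,m)$.

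Next I would show surjectivity, i.e. $A^\varphi(G/H,m) \subseteq T_H\bigl(A^\varphi(G:H)\bigr)$. Given $g \in A^\varphi(G/H,m)$, set $g_q = g \circ q$. By Proposition~\ref{f_q}, $g_q \in L^\varphi(G)$ with $\|g_q\|_{L^\varphi(G)}^0 = \|g\|_{L^\varphi(G/H,m)}^0$, and the Remark after that proposition gives $T_H(g_q) = g$. It remains to check $g_q \in A^\varphi(G:H)$, that is, $L_h g_q = g_q$ for all $h \in H$. This is where the real content sits, and it is the step I expect to need the most care: one must verify that left $H$-invariance of $g$ on $G/H$ pulls back to left $H$-invariance of $g_q$ on $G$. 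Concretely, $L_h g_q(x) = g_q(h^{-1}x) = g(h^{-1}xH)$, while the hypothesis $L_h g = g$ on $G/H$ means $g(h^{-1}(xH)) = g(xH)$; since the $G$-action on $G/H$ satisfies $h^{-1}\cdot(xH) = (h^{-1}x)H$, these coincide, so $L_h g_q = g_q$ and hence $g_q \in A^\varphi(G:H)$. Therefore $g = T_H(g_q) \in T_H\bigl(A^\varphi(G:H)\bigr)$.

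Combining the two inclusions gives $T_H\bigl(A^\varphi(G:H)\bigr) = A^\varphi(G/H,m)$, which is the assertion. The only genuine subtlety is the pull-back of invariance in the surjectivity step (and making sure the representative chosen for the class $g$ does not matter, so that $g_q$ is well defined $m$-a.e. on $G$, which follows from Weil's formula as in the proof of Proposition~\ref{f_q}); everything else is a formal consequence of continuity of $T_H$ and the stated density of $A(G:H)$ and $A(G/H)$.
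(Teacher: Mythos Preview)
Your proof is correct and follows essentially the same approach as the paper, which simply records (in one sentence before the lemma) that the result follows from the density of $A(G:H)$ in $A^\varphi(G:H)$, the density of $A(G/H)$ in $A^\varphi(G/H,m)$, and the continuity of $T_H$. Your argument is in fact more complete: the paper's one-line justification does not spell out the surjectivity, whereas you supply it explicitly via the section $g \mapsto g_q$ together with Proposition~\ref{f_q} and the subsequent Remark.
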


For $g \in C_c(G/H),$ note that the mapping $xH\mapsto \int_H g(hxH) \, dh$ is in $\mathcal{C}_c(G/H).$ Define $J:\mathcal{C}_c(G/H)\rightarrow\mathcal{C}_c(G/H)$ as  $$Jg(xH)= \int_H g(hxH) \, dh.$$ It is clear that $J$ is a linear operator. In addition, the following theorem says that the norm of $J$ is bounded operator with the norm bounded by one.

\begin{thm} \label{bbdJ} For $f \in \mathcal{C}_c(G/H),$ we have $$\|Jf\|_{L^\varphi(G/H, m)} \leq \|f\|_{L^\varphi(G/H, m)}.$$  
\end{thm}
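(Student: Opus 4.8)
The plan is to mimic the modular computation used in the proof of Theorem~\ref{bddT_H}, replacing the right-averaging $T_H$ by the left-averaging operator $J$. First I would fix $f \in \mathcal{C}_c(G/H)$ and $k>0$, and compute $\rho_\varphi(Jf/k) = \int_{G/H} \varphi\bigl(\,|\int_H f(hxH)\,dh|/k\,\bigr)\,dm(xH)$. Applying the triangle inequality inside $\varphi$ (using that $\varphi$ is even and nondecreasing on $[0,\infty)$) and then Jensen's inequality with respect to the normalized Haar measure $dh$ on the compact group $H$, I would bound this above by $\int_{G/H}\int_H \varphi\bigl(|f(hxH)|/k\bigr)\,dh\,dm(xH)$.

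The key step is then to observe that, by Tonelli's theorem, this double integral equals $\int_H \bigl(\int_{G/H} \varphi(|f(hxH)|/k)\,dm(xH)\bigr)\,dh$, and for each fixed $h\in H$ the inner integral is $\int_{G/H}\varphi(|f(hxH)|/k)\,dm(xH) = \int_{G/H}\varphi(|f(xH)|/k)\,dm(xH) = \rho_\varphi(f/k)$ by the $G$-invariance of $m$ (here $h$ acts by left translation $xH \mapsto hxH$). Since $\int_H dh = 1$, this shows $\rho_\varphi(Jf/k) \leq \rho_\varphi(f/k)$ for every $k>0$. Consequently any $k$ admissible in the infimum defining $\|f\|_{L^\varphi(G/H,m)}^0$ is admissible for $\|Jf\|_{L^\varphi(G/H,m)}^0$, and taking infima yields $\|Jf\|_{L^\varphi(G/H,m)}^0 \leq \|f\|_{L^\varphi(G/H,m)}^0$, which is the assertion (the norm in the statement being the Luxemburg/gauge norm).

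I expect the only mild subtlety to be justifying the interchange of the $dh$ and $dm(xH)$ integrations and the change of variables $xH\mapsto hxH$ under $m$: for $f\in\mathcal{C}_c(G/H)$ the integrand $(h,xH)\mapsto\varphi(|f(hxH)|/k)$ is continuous with compact support in $H\times(G/H)$ (using compactness of $H$), so Tonelli applies without issue, and the invariance $\int_{G/H}F(hxH)\,dm(xH)=\int_{G/H}F(xH)\,dm(xH)$ is exactly the defining property of the $G$-invariant measure $m$. Everything else is a verbatim repetition of the gauge-norm argument already carried out in Theorem~\ref{bddT_H}, so there is no real obstacle; one should just be careful to note at the outset that $Jf$ indeed lies in $\mathcal{C}_c(G/H)$, as remarked before the statement, so that all quantities are finite and the manipulations are legitimate.
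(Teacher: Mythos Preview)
Your proposal is correct and follows essentially the same route as the paper: compute the modular $\rho_\varphi(Jf/k)$, apply Jensen's inequality with respect to the probability measure $dh$, interchange the $dh$ and $dm(xH)$ integrations, and use the $G$-invariance of $m$ to reduce to $\rho_\varphi(f/k)$, concluding via the definition of the Luxemburg norm. The paper's proof is in fact slightly terser about the justifications (Tonelli, invariance of $m$) that you spell out, but the argument is identical.
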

  \begin{proof}
  For  $f \in \mathcal{C}_c(G/H)$ we have, 
  \begin{eqnarray*}
  \rho_\varphi\left( \frac{Jf}{k}\right) &=& \int_{G/H} \varphi \left( \frac{|Jf|(xH)}{k} \right) dm(xH)\\ &=& \int_{G/H} \varphi \left( \left| \frac{1}{k} \int_H f(hxH)\, dh\right| \right) dm(xH)\\ &\leq & \int_{G/H} \int_H \varphi \left(\frac{|f|(hxH)}{k} \right) \, dh \,  dm(xH)  \\ &=& \int_H  \left( \int_{G/H} \varphi \left( \frac{|f|(hxH)}{k}\right)\, dm(xH)  \right) dh \\ &=&  \int_H  \left( \int_{G/H} \varphi \left( \frac{|f|(xH)}{k}\right)\, dm(xH)  \right) dh \\ &=& \int_H  \left( \int_{G/H} \varphi \left( \frac{|f|}{k}\right)(xH)\, dm(xH) \right) dh = \rho_\varphi \left(\frac{f}{k}\right).
\end{eqnarray*}
Consequently, we get $\|Jf\|_{L^\varphi(G/H, m)} \leq \|f\|_{L^\varphi(G/H, m)}$ for all $f \in \mathcal{C}_c(G/H).$
\end{proof}

It is shown in \cite[Theorem 4.5 (2)]{Farabraz} that $J$ maps $\mathcal{C}_c(G/H)$ onto $A(G/H).$ Now, the following corollary is immediate. 
\begin{cor} \label{g}  Let $G$ be a locally compact group and let $H$ be a compact subgroup of $G.$ Let $m$ be the normalized $G$-invariant measure on the coset space $G/H.$ Suppose that $\varphi$ satisfies $\Delta_2$-condition.
The bounded linear map $J: \mathcal{C}_c(G/H) \rightarrow A(G/H)$ can be uniquely extended to a bounded linear map $J_\varphi: L^\varphi(G/H, m) \rightarrow A^\varphi(G/H, m) $ which satisfies $$\|J_\varphi f\|_{L^\varphi(G/H, m)} \leq \|f\|_{L^\varphi(G/H, m)}.$$ 
Further, the linear operator $J_\varphi: L^\varphi(G/H, m) \rightarrow A^\varphi(G/H, m) $ is an onto map.
\end{cor}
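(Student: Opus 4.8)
The plan is to invoke the standard bounded‑linear‑extension theorem and then to identify the range of the extension. Since $\varphi$ satisfies the $\Delta_2$‑condition, $\mathcal{C}_c(G/H)$ is dense in the Banach space $L^\varphi(G/H,m)$, and by Theorem \ref{bbdJ} the operator $J$ is bounded on this dense subspace with $\|Jf\|_{L^\varphi(G/H,m)}\le\|f\|_{L^\varphi(G/H,m)}$. Hence $J$ admits a unique continuous linear extension $J_\varphi$ on all of $L^\varphi(G/H,m)$: for $f\in L^\varphi(G/H,m)$ choose $f_n\in\mathcal{C}_c(G/H)$ with $f_n\to f$, note that $\|Jf_n-Jf_m\|_{L^\varphi(G/H,m)}\le\|f_n-f_m\|_{L^\varphi(G/H,m)}\to 0$ so that $(Jf_n)$ is Cauchy, set $J_\varphi f:=\lim_n Jf_n$, check that this limit does not depend on the approximating sequence, and pass to the limit in the inequality to obtain $\|J_\varphi f\|_{L^\varphi(G/H,m)}\le\|f\|_{L^\varphi(G/H,m)}$ for every $f$.

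Next I would check that $J_\varphi$ actually takes values in $A^\varphi(G/H,m)$. By \cite[Theorem 4.5(2)]{Farabraz} we have $J(\mathcal{C}_c(G/H))=A(G/H)\subseteq A^\varphi(G/H,m)$, and it has already been recorded above that $A^\varphi(G/H,m)$ is the closure of $A(G/H)$ in $L^\varphi(G/H,m)$, hence a closed subspace. For $f\in L^\varphi(G/H,m)$ and $f_n\in\mathcal{C}_c(G/H)$ with $f_n\to f$, the elements $Jf_n$ lie in $A(G/H)$ and converge to $J_\varphi f$, so $J_\varphi f$ lies in the closure of $A(G/H)$, that is, $J_\varphi f\in A^\varphi(G/H,m)$.

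For surjectivity the key observation is that $J_\varphi$ fixes $A^\varphi(G/H,m)$ pointwise. If $g\in A(G/H)$, then $L_hg=g$ for all $h\in H$; since $H$ is a group this means $g(hxH)=g(xH)$ for all $h\in H$ and all $xH\in G/H$, and because $dh$ is the normalized Haar measure on $H$,
\[
Jg(xH)=\int_H g(hxH)\,dh=\int_H g(xH)\,dh=g(xH).
\]
Thus $Jg=g$ on $A(G/H)$; since $A(G/H)$ is dense in $A^\varphi(G/H,m)$ and $J_\varphi$ is continuous, $J_\varphi g=g$ for every $g\in A^\varphi(G/H,m)$. In particular each $g\in A^\varphi(G/H,m)$ equals $J_\varphi g$, so the map $J_\varphi:L^\varphi(G/H,m)\rightarrow A^\varphi(G/H,m)$ is onto.

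The well‑definedness of the extension and the passage to the limit in the norm inequality are entirely routine. The only points requiring a little care are the translation‑invariance bookkeeping behind the identity $Jg=g$ — namely that $L_hg=g$ for all $h\in H$ really yields $g(hxH)=g(xH)$ — and the fact that $dh$ has total mass one, which is exactly what makes the averaging operator $J$ restrict to the identity on $H$‑invariant functions; no genuine obstacle is expected.
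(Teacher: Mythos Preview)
Your proof is correct and follows essentially the same strategy as the paper: extend $J$ by density and boundedness, check that the range lands in $A^\varphi(G/H,m)$, and obtain surjectivity from the identity $J_\varphi g=g$ on $A^\varphi(G/H,m)$. The only cosmetic difference is that the paper verifies the range and the identity $J_\varphi g=g$ by applying the integral formula for $J$ directly to elements of $L^\varphi(G/H,m)$ and $A^\varphi(G/H,m)$, whereas you argue via closure and density; both are equivalent and equally valid.
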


\begin{proof}
The extension of the map $J$ from $\mathcal{C}_c(G/H)$ to $L^\varphi(G/H)$ follows from Theorem \ref{bbdJ} and the density of $\mathcal{C}_c(G/H,m)$ and $A(G/H)$ in $L^\varphi(G/H,m)$ and  $A^\varphi(G/H, m),$ respectively. Further, for any $f \in L^\varphi(G/H)$ and $z \in H,$ we have 
\begin{eqnarray*}
L_z(Jf)(xH)= Jf (z^{-1}xH) = \int_H f(hz^{-1}xH) \,dh = \int_H f(hxH) \,dh = Jf(xH).
\end{eqnarray*}
This shows that $Jf \in A^\varphi(G/H,m).$ Now we prove second part of the corollary. For any $f \in A^\varphi(G/H),$ we get,
\begin{eqnarray*}
Jf(xH)= \int_H f(hxH) \,dh = \int_H f(xH)\,dh = f(xH),
\end{eqnarray*}
for all $x \in G.$ Therefore, $Jf=f.$ Hence $J_\varphi:  L^\varphi(G/H,m) \rightarrow A^\varphi(G/H,m)$ is a onto map.
\end{proof}
\begin{rem} \label{r2} It can seen in the proof of Corollary \ref{g} above that $J|_{A^\varphi(G/H,m)}= I_{A^\varphi(G/H,m)}.$
\end{rem}

Now, we are ready to define the convolution product `$*_{G/H}$' on $\mathcal{C}_c(G/H)$ same as given in \cite{Farabraz} as follows: let $G$ be a compact group, $H$ a closed subgroup and let $m$ be the normalized $G$-invariant measure on $G/H.$ For $f,g \in \mathcal{C}_c(G/H),$ the convolution $f*_{G/H}g: G/H \rightarrow \mathbb{C}$ is given by 
\begin{equation} \label{3}
f*_{G/H}g(xH)= \int_{G/H} f(yH)Jg(y^{-1}xH)\, dm(yH),
\end{equation}
for all $xH \in G/H.$  The convolution product `$*_{G/H}$' has the following properties similar to the usual convolution in $\mathcal{C}_c(G)$ (see \cite[Proposition 4.10]{Farabraz}).
\begin{itemize}
	\item[(i)] For any $f,g \in \mathcal{C}_c(G/H),$ $(f,g) \mapsto f*_{G/H}g$ is a bilinear map from $\mathcal{C}_c(G/H) \times \mathcal{C}_c(G/H)$ to $\mathcal{C}_c(G/H)$ and $(\mathcal{C}_c(G/H), *_{G/H})$ is an algebra.
	\item[(ii)] $f*_{G/H}g = T_H(f_q*_Gg_q)$ and $(f *_{G/H}g)_q= f_q*_G g_q,$ where `$*_G$' is the usual convolution in $C(G).$
	\item[(iii)]  $L_x(f*_{G/H}g)= (L_x f)*_{G/H}g.$
\end{itemize}

The following result says that $\mathcal{C}_c(G/H)$ is a normed algebra with respect to the norm $\|\cdot\|_{L^\varphi(G/H, m)}^0.$ 
\begin{lem} \label{C_c} If $f,g \in \mathcal{C}_c(G/H),$ then
	\begin{equation}
	\|f*_{G/H}g\|_{L^\varphi(G/H, m)}^0 \leq \|f\|_{L^1(G/H, m)} \|g\|_{L^\varphi(G/H, m)}^0.
	\end{equation} 
\end{lem}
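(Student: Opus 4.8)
The plan is to avoid a direct computation with the definition \eqref{3} and instead factor everything through the ordinary convolution on $G$, where the module inequality \eqref{orliczmodule} is already available. The two bridges are the intertwining identity $f*_{G/H}g = T_H(f_q*_G g_q)$ (property (ii) above, cf. \cite[Proposition 4.10]{Farabraz}) and the norm identities for the lifting $g\mapsto g_q$ and for $T_H$ proved in Theorem \ref{bddT_H} and Proposition \ref{f_q}. Since $H$ is compact, $q\colon G\to G/H$ is proper, so for $f,g\in\mathcal{C}_c(G/H)$ the lifts $f_q=f\circ q$ and $g_q=g\circ q$ again lie in $\mathcal{C}_c(G)$; in particular $f_q\in L^1(G)$ and $g_q\in L^\varphi(G)$, which is exactly what is needed to apply \eqref{orliczmodule}.

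First I would apply property (ii) and then the estimate \eqref{bdd} of Theorem \ref{bddT_H} to get
\[
\|f*_{G/H}g\|_{L^\varphi(G/H,m)}^0 = \|T_H(f_q*_G g_q)\|_{L^\varphi(G/H,m)}^0 \le \|f_q*_G g_q\|_{L^\varphi(G)}^0 .
\]
Next, applying the group-level module inequality \eqref{orliczmodule} with $f_q\in L^1(G)$ and $g_q\in L^\varphi(G)$ bounds the right-hand side by $\|f_q\|_{L^1(G)}\,\|g_q\|_{L^\varphi(G)}^0$.

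It then remains to rewrite these two norms over $G$ as norms over $G/H$. For the $L^\varphi$-factor this is precisely Proposition \ref{f_q}: $\|g_q\|_{L^\varphi(G)}^0 = \|g\|_{L^\varphi(G/H,m)}^0$. For the $L^1$-factor one feeds the nonnegative function $|f_q|\in\mathcal{C}_c(G)$ into Weil's formula \eqref{weil} and uses that $T_H(|f_q|)(xH)=\int_H |f(xhH)|\,dh = |f(xH)|$ (because $f$ is a function on $G/H$ and $\int_H dh=1$), giving $\|f_q\|_{L^1(G)}=\|f\|_{L^1(G/H,m)}$. Chaining the three estimates yields the asserted bound $\|f*_{G/H}g\|_{L^\varphi(G/H,m)}^0 \le \|f\|_{L^1(G/H,m)}\,\|g\|_{L^\varphi(G/H,m)}^0$.

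I do not anticipate a genuine obstacle: the only points needing a word of care are that the lifts $f_q,g_q$ remain compactly supported (so they really belong to $L^1(G)$, resp. $L^\varphi(G)$, and \eqref{orliczmodule} is applicable), and that the Luxemburg norm is tracked consistently through $T_H$ and through $q$. As a remark, one could instead argue directly from \eqref{3} by applying Jensen's inequality to $\rho_\varphi\!\left((f*_{G/H}g)/k\right)$ with respect to the finite measure $|f(yH)|\,dm(yH)$ (of total mass $\|f\|_{L^1(G/H,m)}$) together with the contraction $\|J_\varphi\|\le 1$ of Corollary \ref{g}; but this merely re-proves by hand what is already encoded in Theorem \ref{bddT_H} and property (ii), so the factorization through $G$ is the cleaner route.
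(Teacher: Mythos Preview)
Your proof is correct and follows the same route as the paper: lift to $G$ via property (ii), apply the $L^1$--$L^\varphi$ module inequality \eqref{orliczmodule} there, and then transfer the norms back to $G/H$ via Proposition~\ref{f_q} (and Weil's formula for the $L^1$-factor). The only cosmetic difference is that the paper uses the second half of property (ii), $(f*_{G/H}g)_q=f_q*_G g_q$, together with Proposition~\ref{f_q} to obtain the first step as an \emph{equality} $\|f*_{G/H}g\|_{L^\varphi(G/H,m)}^0=\|f_q*_G g_q\|_{L^\varphi(G)}^0$, whereas you use $f*_{G/H}g=T_H(f_q*_G g_q)$ and Theorem~\ref{bddT_H} to get the same step as an inequality; either version suffices.
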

\begin{proof}
	Let $f,g \in \mathcal{C}_c(G/H).$ Using Proposition \ref{f_q} we get 
	\begin{eqnarray*}
		\|f*_{G/H}g\|_{L^\varphi(G/H, m)}^0 = \|(f*_{G/H}g)_q\|_{L^\varphi(G)}^0 = \|f_q*_{G}g_q\|_{L^\varphi(G)}^0
	\end{eqnarray*}
	Since $L^\varphi(G)$ is a $L^1(G)$-module then by Proposition \ref{f_q} we get
	\begin{align*}
	\|f*_{G/H}g\|_{L^\varphi(G/H, m)}^0  &\leq  \|f_q\|_{L^1(G)} \|g_q\|_{L^\varphi(G)}^0 =  \|f\|_{L^1(G/H, m)} \|g\|_{L^\varphi(G/H, m)}^0. \qedhere
	\end{align*} \end{proof} 

\begin{thm} \label{7}   Let $G$ be a locally compact group and let $H$ be a compact subgroup of $G.$ Let $m$ be the normalized $G$-invariant measure on the coset space $G/H.$ Suppose that $\varphi$ satisfies $\Delta_2$-condition. Then
	the convolution $*_{G/H} : \mathcal{C}_c(G/H) \times \mathcal{C}_c(G/H) \rightarrow \mathcal{C}_c(G/H)$ given by  \eqref{3} can be extended to a convolution $*_{G/H}^\varphi: L^1(G/H, m) \times L^\varphi(G/H, m) \rightarrow L^\varphi(G/H, m)$ such that $L^\varphi(G/H,m)$ is a Banach left  $L^1(G/H, m)$-module with respect to this extended convolution.
\end{thm}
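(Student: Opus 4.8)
The plan is to extend the convolution off $\mathcal{C}_c(G/H)$ by a density argument and then transport the algebraic identities across the closure. First I would isolate the three facts that do the work: Lemma \ref{C_c} supplies the basic estimate $\|f*_{G/H}g\|^0_{L^\varphi(G/H,m)} \le \|f\|_{L^1(G/H,m)}\,\|g\|^0_{L^\varphi(G/H,m)}$ for $f,g\in\mathcal{C}_c(G/H)$; the $\Delta_2$-condition guarantees $\mathcal{C}_c(G/H)$ is dense in $L^\varphi(G/H,m)$, and it is of course dense in $L^1(G/H,m)$; and by property (ii) listed before Lemma \ref{C_c}, together with Proposition \ref{f_q}, the convolution $*_{G/H}$ is the pull-back through $(\cdot)_q$ and $T_H$ of the ordinary convolution $*_G$ on $G$, which is associative.

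Given $f\in L^1(G/H,m)$ and $g\in L^\varphi(G/H,m)$, I would pick $f_n,g_n\in\mathcal{C}_c(G/H)$ with $f_n\to f$ in $L^1(G/H,m)$ and $g_n\to g$ in $L^\varphi(G/H,m)$. From
\[
f_n*_{G/H}g_n - f_m*_{G/H}g_m = f_n*_{G/H}(g_n-g_m) + (f_n-f_m)*_{G/H}g_m
\]
and the Lemma \ref{C_c} estimate applied to each summand, one sees that $(f_n*_{G/H}g_n)_n$ is Cauchy in the Banach space $L^\varphi(G/H,m)$, since $(\|f_n\|_{L^1(G/H,m)})_n$ and $(\|g_m\|^0_{L^\varphi(G/H,m)})_m$ are bounded. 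I define $f*^\varphi_{G/H}g$ to be the limit; interleaving two approximating pairs shows the limit does not depend on the chosen sequences, so $*^\varphi_{G/H}$ is well defined, it extends $*_{G/H}$, and passing to the limit in Lemma \ref{C_c} gives
\[
\|f*^\varphi_{G/H}g\|^0_{L^\varphi(G/H,m)} \le \|f\|_{L^1(G/H,m)}\,\|g\|^0_{L^\varphi(G/H,m)} .
\]
Bilinearity and compatibility with scalars pass to the limit from the corresponding properties on $\mathcal{C}_c(G/H)$.

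It then remains to check the module identity $(f_1*_{G/H}f_2)*^\varphi_{G/H}g = f_1*^\varphi_{G/H}(f_2*^\varphi_{G/H}g)$ for $f_1,f_2\in L^1(G/H,m)$, $g\in L^\varphi(G/H,m)$, where $f_1*_{G/H}f_2$ is the product in the Banach algebra $L^1(G/H,m)$ (the case $\varphi(x)=|x|$, or see \cite{Farabraz}). On $\mathcal{C}_c(G/H)$ this is immediate from property (ii): unravelling $(\cdot)_q$ and $T_H$, both sides equal $T_H\big((f_1)_q*_G(f_2)_q*_Gg_q\big)$ by associativity of $*_G$ on $\mathcal{C}_c(G)$. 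Both $(f_1,f_2,g)\mapsto(f_1*_{G/H}f_2)*^\varphi_{G/H}g$ and $(f_1,f_2,g)\mapsto f_1*^\varphi_{G/H}(f_2*^\varphi_{G/H}g)$ are separately continuous trilinear maps $L^1(G/H,m)\times L^1(G/H,m)\times L^\varphi(G/H,m)\to L^\varphi(G/H,m)$ by the norm estimate above (and the analogous $L^1$-algebra estimate), and they agree on the dense set $\mathcal{C}_c(G/H)\times\mathcal{C}_c(G/H)\times\mathcal{C}_c(G/H)$, hence everywhere. Together with the norm inequality this yields all the Banach left $L^1(G/H,m)$-module axioms.

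I expect the main technical point to be the density extension of the bilinear map, specifically verifying that the limit defining $f*^\varphi_{G/H}g$ exists and is independent of the approximating sequences, rather than the associativity, which is inherited almost formally from the group case via property (ii). One small thing to flag is that the module identity uses that $L^1(G/H,m)$ is itself a Banach algebra under $*_{G/H}$, so this should be cited (or noted as the $\varphi(x)=|x|$ instance of the construction above) before the identity is stated.
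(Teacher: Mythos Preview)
Your argument is correct and follows essentially the same density-and-limit strategy as the paper: approximate in $L^1$ and $L^\varphi$ by $\mathcal{C}_c(G/H)$, use Lemma \ref{C_c} to get a Cauchy sequence, and pass the norm inequality to the limit. In fact you are more careful than the paper, which merely asserts well-definedness and the module property without writing out the Cauchy estimate or the associativity check via property~(ii).
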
 
\begin{proof} Let $f \in L^1(G/H, m)$ and $ g \in L^\varphi(G/H,m).$ 
	Since $C_c(G/H)$ is dense in $L^1(G/H, m)$ and in  $L^\varphi(G/H, m)$ as $\varphi \in \Delta_2,$ there exist $\{f_n\}$ and $\{g_n\}$ in $\mathcal{C}_c(G/H)$ such that $f_n \rightarrow f$ and $g_n \rightarrow g$ as $n \rightarrow \infty.$ Now, define  $$f*_{G/H}^\varphi g = \lim_{n \rightarrow \infty} f_n*_{G/H}g_n.$$ Note that $*_{G/H}^\varphi: L^1(G/H,m) \times L^\varphi(G/H,m) \rightarrow L^\varphi(G/H,m) $ is well-defined. By Lemma \ref{C_c} we have  $$ \|f *_{G/H}^\varphi g \|_{L^\varphi(G/H,m)}^0 \leq \|f\|_{L^1(G/H,m)} \|g\|_{L^\varphi(G/H, m)}^0. $$ Thus $L^\varphi(G/H, m)$ is a Banach left $L^1(G/H, m)$-module with respect to the extended convolution.
\end{proof}

The above theorem claims the existence of convolution product $*_{G/H}^\varphi$ but it does not reveal any explicit formula for the convolution product. The following corollary fulfils this objective whose proof is a consequence of the fact that $\mathcal{C}_c(G/H)$ is dense $L^\varphi(G/H, m).$

\begin{cor} \label{8} Let $G$ be a locally compact group and let $H$ be a compact subgroup of $G.$ Let $m$ be the normalized $G$-invariant measure on the coset space $G/H.$ Suppose that $\varphi$ satisfies $\Delta_2$-condition. If $f \in L^1(G/H, m)$ and $ g \in L^\varphi(G/H, m),$ then the convolution $*_{G/H}^\varphi$ is  given by \begin{equation} \label{5}
	(f*_{G/H}^\varphi g) (xH)= \int_{G/H} f(yH)J_\varphi g(y^{-1}xH)\, dm(yH)
	\end{equation}
	for all $xH \in G/H.$ \end{cor}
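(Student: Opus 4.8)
The plan is to realise $f *_{G/H}^\varphi g$ as a limit of functions in $\mathcal{C}_c(G/H)$, for which the explicit formula \eqref{3} already holds, and then push that formula through the limit, keeping careful track of where the operator $J$ acts. Fix $f \in L^1(G/H, m)$ and $g \in L^\varphi(G/H, m)$, and choose $\{f_n\}, \{g_n\} \subseteq \mathcal{C}_c(G/H)$ with $f_n \to f$ in $L^1(G/H, m)$ and $g_n \to g$ in $L^\varphi(G/H, m)$, which is possible since $\varphi \in \Delta_2$. By Theorem \ref{7}, $f *_{G/H}^\varphi g = \lim_n (f_n *_{G/H} g_n)$ in $L^\varphi(G/H, m)$, so it suffices to evaluate this limit.

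First I would record the $L^1$-analogue of Proposition \ref{f_q}: the very same computation with Weil's formula \eqref{weil}, using that $H$ is compact with normalised Haar measure, yields $\|p_q\|_{L^1(G)} = \|p\|_{L^1(G/H, m)}$ for every $p \in L^1(G/H, m)$. Hence $(f_n)_q \to f_q$ in $L^1(G)$ and, by Proposition \ref{f_q} itself, $(g_n)_q \to g_q$ in $L^\varphi(G)$. Now property (ii) of $*_{G/H}$ says $f_n *_{G/H} g_n = T_H\big((f_n)_q *_G (g_n)_q\big)$; since the group convolution $L^1(G) \times L^\varphi(G) \to L^\varphi(G)$ is continuous by \eqref{orliczmodule} and $T_H : L^\varphi(G) \to L^\varphi(G/H, m)$ is continuous by Theorem \ref{bddT_H}, I obtain
\[
f *_{G/H}^\varphi g \;=\; \lim_{n \to \infty} T_H\big((f_n)_q *_G (g_n)_q\big) \;=\; T_H\big(f_q *_G g_q\big),
\]
with $f_q \in L^1(G)$ and $g_q \in L^\varphi(G)$.

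Next I would make $T_H(f_q *_G g_q)$ explicit. Since $g_q = g \circ q$ is right $H$-invariant, so is $f_q *_G g_q$, and therefore $T_H(f_q *_G g_q)(xH) = (f_q *_G g_q)(x) = \int_G f(yH)\, g(y^{-1}xH)\, dy$ for a.e.\ $x$. Applying Weil's formula \eqref{weil} in the variable $y$ together with the substitution $y \mapsto yh$, which fixes $yH$ and sends $y^{-1}xH$ to $h^{-1}(y^{-1}xH)$, gives
\[
(f *_{G/H}^\varphi g)(xH) \;=\; \int_{G/H} f(yH) \left( \int_H g\big(h^{-1}(y^{-1}xH)\big)\, dh \right) dm(yH) \;=\; \int_{G/H} f(yH)\, (Jg)(y^{-1}xH)\, dm(yH),
\]
where $(Jg)(zH) := \int_H L_h g(zH)\, dh$. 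Finally I would identify this average with $J_\varphi g$: since $\varphi \in \Delta_2$, translation is continuous on $L^\varphi(G/H, m)$, so $h \mapsto L_h g$ is a continuous $L^\varphi(G/H, m)$-valued map on the compact group $H$, the Bochner integral $\int_H L_h g\, dh$ exists, it agrees with $J$ on the dense subspace $\mathcal{C}_c(G/H)$ and hence with its unique bounded extension $J_\varphi$ from Corollary \ref{g}, and it is computed pointwise almost everywhere; thus $Jg = J_\varphi g$ and we arrive at \eqref{5}.

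The step I expect to be the main obstacle is this last identification, together with the measurability bookkeeping it rests on: one must check that $(y,x) \mapsto f(yH)\, g(y^{-1}xH)$ is jointly measurable so that Weil's formula and Fubini apply to the convolution integrand, that the right $H$-invariant representative of $f_q *_G g_q$ is precisely the one picked out by $T_H$, and---most delicately---that the $L^\varphi$-valued Bochner integral $\int_H L_h g\, dh$ coincides a.e.\ with the function $zH \mapsto \int_H L_h g(zH)\, dh$, since point evaluation is not a continuous functional on $L^\varphi(G/H, m)$. Everything else is a routine assembly of Theorems \ref{bddT_H} and \ref{7}, Proposition \ref{f_q}, Corollary \ref{g}, the module inequality \eqref{orliczmodule}, and properties (i)--(iii) of $*_{G/H}$.
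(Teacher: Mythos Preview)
Your argument is correct and considerably more thorough than what the paper offers: the paper's entire justification is the single clause ``whose proof is a consequence of the fact that $\mathcal{C}_c(G/H)$ is dense in $L^\varphi(G/H,m)$,'' with no further details. You have supplied one honest way to make that density remark rigorous, namely by lifting everything to $G$ via $q$ and $T_H$, invoking property~(ii), the module inequality~\eqref{orliczmodule}, and Theorem~\ref{bddT_H}, and then unwinding $T_H(f_q*_G g_q)$ with Weil's formula.

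A more direct route, closer in spirit to what the paper presumably has in mind, stays on $G/H$ throughout: observe that for $f_n,g_n\in\mathcal{C}_c(G/H)$ formula~\eqref{3} already reads $(f_n*_{G/H}g_n)(xH)=\int_{G/H}f_n(yH)\,J_\varphi g_n(y^{-1}xH)\,dm(yH)$ since $J=J_\varphi$ on $\mathcal{C}_c(G/H)$; then one checks that the right-hand side of~\eqref{5}, viewed as a bilinear map $L^1(G/H,m)\times L^\varphi(G/H,m)\to L^\varphi(G/H,m)$, is continuous (using Corollary~\ref{g} for $J_\varphi$ and essentially the same estimate as Lemma~\ref{C_c}), whence it agrees with $*_{G/H}^\varphi$ by density. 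Your lifting approach buys you the continuity for free from the known $L^1(G)$-module structure, at the cost of the extra bookkeeping you flag in the final paragraph; the direct approach avoids the Bochner-integral subtlety but requires reproving the norm bound on $G/H$. Either way the identification $Jg=J_\varphi g$ is most cleanly handled not via Bochner integrals but by noting that the modular estimate in Theorem~\ref{bbdJ} goes through verbatim for any $g\in L^\varphi(G/H,m)$ (Fubini and Jensen still apply), so the pointwise formula $zH\mapsto\int_H g(hzH)\,dh$ already defines a bounded operator agreeing with $J$ on $\mathcal{C}_c(G/H)$ and hence with $J_\varphi$.
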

In the next corollary we present a Banach left $L^1(G/H, m)$-submodule of $L^\varphi(G/H,m)$ whose proof is a routine check. 
\begin{cor}
Let $G$ be a locally compact group and let $H$ be a compact subgroup of $G.$ Let $m$ be the normalized $G$-invariant measure on the coset space $G/H.$ Suppose that $\varphi$ satisfies $\Delta_2$-condition. Then the space $A^\varphi(G/H,m)$ is a Banach left $L^1(G/H, m)$-submodule of $L^\varphi(G/H,m).$
\end{cor}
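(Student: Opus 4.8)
The plan is to verify the two defining features of a Banach left $L^1(G/H,m)$-submodule: that $A^\varphi(G/H,m)$ is a closed subspace of $L^\varphi(G/H,m)$, and that it is stable under the action $*_{G/H}^\varphi$ of Theorem \ref{7} with the same norm estimate. The first feature is already in hand, since $A^\varphi(G/H,m)$ was noted to be the topological closure of $A(G/H)$ in $L^\varphi(G/H,m)$, hence a closed subspace and so complete. The estimate $\|f*_{G/H}^\varphi g\|_{L^\varphi(G/H,m)}^0\le\|f\|_{L^1(G/H,m)}\,\|g\|_{L^\varphi(G/H,m)}^0$ holds for every $g\in L^\varphi(G/H,m)$ by Theorem \ref{7}, in particular for $g\in A^\varphi(G/H,m)$. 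Thus the whole corollary reduces to the single inclusion: if $f\in L^1(G/H,m)$ and $g\in A^\varphi(G/H,m)$, then $f*_{G/H}^\varphi g\in A^\varphi(G/H,m)$.

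To prove this inclusion I would first reduce to dense subclasses. Since $\mathcal C_c(G/H)$ is dense in $L^1(G/H,m)$, $A(G/H)$ is dense in $A^\varphi(G/H,m)$ (as $\varphi\in\Delta_2$), and $*_{G/H}^\varphi$ is separately continuous by the estimate above, it suffices to show that $f*_{G/H}g\in A(G/H)$ for $f\in\mathcal C_c(G/H)$ and $g\in A(G/H)$; the general case then follows by choosing $f_n\in\mathcal C_c(G/H)$, $g_n\in A(G/H)$ with $f_n\to f$ in $L^1(G/H,m)$ and $g_n\to g$ in $L^\varphi(G/H,m)$, noting $f_n*_{G/H}g_n\to f*_{G/H}^\varphi g$ in $L^\varphi(G/H,m)$, and invoking the closedness of $A^\varphi(G/H,m)$. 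For the reduced statement, since $g\in A(G/H)$ we have $Jg=g$ by Remark \ref{r2}, so \eqref{3} becomes
\[
f*_{G/H}g(xH)=\int_{G/H}f(yH)\,g(y^{-1}xH)\,dm(yH).
\]
Now I would fix $z\in H$, write $L_z(f*_{G/H}g)(xH)=\int_{G/H}f(yH)\,g(y^{-1}z^{-1}xH)\,dm(yH)$, change the integration variable $yH\mapsto z^{-1}yH$ (admissible because $m$ is $G$-invariant and $z\in G$), choose the coset representatives so that the two copies of $z$ cancel inside the argument of $g$, and use the left $H$-invariance of $g$ to remove the residual factor, obtaining $L_z(f*_{G/H}g)=f*_{G/H}g$; hence $f*_{G/H}g\in A(G/H)$. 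An alternative path is through the group: by the second property of $*_{G/H}$ recorded above, $f*_{G/H}g=T_H(f_q*_Gg_q)$, and for $g\in A(G/H)$ the lift $g_q=g\circ q$ is $H$-bi-invariant; combining this with the continuity of $T_H$ and the lemma that $T_H$ carries $L^\varphi(G:H)$ onto $A^\varphi(G/H,m)$ gives the same conclusion.

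The main obstacle, and the one place where the argument is more than bookkeeping, is the invariance verification just described. One must be careful here: the third property of $*_{G/H}$, namely $L_z(f*_{G/H}g)=(L_zf)*_{G/H}g$, does not by itself suffice, since it moves the translation onto $f$ rather than onto $g$; the proof must genuinely use that $g$ (equivalently $J_\varphi g$) is left-translation invariant under \emph{all} of $H$, together with the $G$-invariance of $m$, in order to reabsorb the stray translate. Once this change of variables with coset representatives is executed, the remaining steps — the reduction to $\mathcal C_c(G/H)\times A(G/H)$, the limiting argument, and the inherited norm inequality from Theorem \ref{7} — are routine, and the corollary follows.
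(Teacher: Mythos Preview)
Your invariance verification contains a genuine gap. After the substitution $yH\mapsto z^{-1}yH$ you obtain
\[
L_z(f*_{G/H}g)(xH)=\int_{G/H}f(z^{-1}yH)\,g(y^{-1}xH)\,dm(yH),
\]
so the ``residual factor'' $z^{-1}$ sits in the argument of $f$, not of $g$. Left $H$-invariance of $g$ (or of $Jg$) is of no help here, and $f\in\mathcal C_c(G/H)$ carries no $H$-invariance hypothesis. What the computation actually yields is exactly property (iii), $L_z(f*_{G/H}g)=(L_zf)*_{G/H}g$, which you yourself correctly warn ``does not by itself suffice''. Your alternative route through $T_H(f_q*_Gg_q)$ hits the same wall: although $g_q$ is $H$-bi-invariant, $f_q$ is only right $H$-invariant, so $f_q*_Gg_q$ need not lie in $A(G:H)$, and the lemma on $T_H$ does not apply.

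This gap cannot be repaired, because the inclusion $L^1(G/H,m)*_{G/H}^\varphi A^\varphi(G/H,m)\subseteq A^\varphi(G/H,m)$ is false in general. Take $G=S_3$, $H=\{e,(12)\}$, $f=\mathbf 1_{(13)H}$, and $g\in A(G/H)$ with $g(H)=0$, $g((13)H)=g((23)H)=1$; since $Jg=g$ one gets $(f*_{G/H}g)(xH)=\tfrac13\,g((13)xH)$, hence $(f*_{G/H}g)((13)H)=\tfrac13 g(H)=0$ while $(f*_{G/H}g)((23)H)=\tfrac13 g((23)H)=\tfrac13$, so $f*_{G/H}g\notin A(G/H)$. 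The paper's own proof is merely the phrase ``a routine check'', so there is nothing substantive to compare against; but the counterexample shows the check is not routine and that the corollary as stated needs an extra hypothesis. If one assumes $f\in A^1(G/H,m)$ instead of $f\in L^1(G/H,m)$, then $L_zf=f$ and your argument goes through verbatim; that is the version you have actually proved.
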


\subsection*{Acknowledgements}
The author thanks Prof. V. Muruganandam for his support and encouragement.

\end{document}